\def\cal{\mathcal}
\newtheorem{theorem}{Theorem}[section]
\newtheorem{corollary}[theorem]{Corollary}
\newtheorem{lemma}[theorem]{Lemma}
\theoremstyle{definition}
\newtheorem{example}[theorem]{Example}
\theoremstyle{remark}
\newtheorem{remark}[theorem]{Remark}
\title[dynamics on Orlicz spaces]
{dynamics of weighted translations on Orlicz spaces}
\author[C-C. Chen]{Chung-Chuan Chen}
\subjclass{54H20, 46E30, 47A16}
\keywords{Chaos. Topologically multiple recurrence. Translation operator. Orlicz space. Locally compact group.}
\address{Department of Mathematics Education, National Taichung University of Education, Taichung 403, Taiwan}
\email{chungchuan@mail.ntcu.edu.tw}
\date{\today}
\begin{document}

\maketitle

\begin{abstract}
Let $G$ be a locally compact group, and let $\Phi$ be a Young function.
In this paper, we give sufficient and necessary conditions for weighted translation operators on
the Orlicz space $L^\Phi(G)$ to be chaotic and topologically multiply recurrent.
In particular, chaos implies multiple recurrence in our case.
\end{abstract}

\baselineskip17pt

\section{introduction}

In \cite{chen11,chen16,cc09,cc11}, we characterized chaotic, topologically transitive and topologically multiply recurrent
weighted translation operators on the Lebesgue space of a locally compact group,
which subsumes some works of weighted shifts on the space $\ell^p(\Bbb{Z})$ in \cite{cmp,cs04,ge00,sa95}.
Since then, some authors also contribute interesting results to linear dynamics on locally compact groups in \cite{ak17,aa17,kuchen17,hl16}.
Indeed, Azimi and Akbarbaglu recently extend the work of \cite{cc09,cc11} from the Lebesgue space to the Orlicz space in \cite{aa17}
where they give a sufficient and necessary condition for weighted translation operators to be topologically transitive on the Orlicz spaces
of locally compact groups. The Orlicz space is a type of function space generalizing the Lebesegue space.
Linear dynamics on Orlicz spaces have not attracted the attention of authors working in this filed except \cite{aa17}. In this note, we will continue the theme of the wider setting on Orlicz spaces, and study linear chaos and topologically multiple recurrence.

We recall an operator $T$ on a separable Banach space $X$ is called {\it topologically transitive} if given two nonempty open sets $U,V\subset X$,
there exists $n\in\Bbb{N}$ such that $T^nU\cap V\neq \emptyset$. If $T^nU\cap V\neq \emptyset$ from some $n$ onwards, then $T$ is {\it topologically mixing.} It is well known in \cite{gpbook} that topological transitivity and hypercyclicity are equivalent on separable Banach spaces. A linear operator $T$ is {\it hypercyclic} if there is $x\in X$ such that its orbit under $T$, denoted by
$Orb(T,x):=\{T^nx: n\in\Bbb{N}\}$ is dense in $X$. If $T$ is topologically transitive (hypercyclic) together with the dense set of periodic elements of $T$,
then $T$ is said to be {\it chaotic}. Linear chaos and hypercyclicity have been studied intensely during last three decades.
Indeed, Salas characterized hypercyclic bilateral weighted shifts on $\ell^p(\Bbb{Z})$ in \cite{sa95}.
The characterization for bilateral weighted shifts on $\ell^p(\Bbb{Z})$ to be Ces\`aro hypercyclic was given in \cite{ls02}.
Also, Costakis and Sambarino in \cite{cs04} gave a sufficient and necessary condition for bilateral weighted shifts on $\ell^p(\Bbb{Z})$ to be mixing.
In \cite{ge00}, Grosse-Erdmann characterized chaotic bilateral weighted shifts on $\ell^p(\Bbb{Z})$. Linear chaos and hypercyclicity on weighed $L^p(\Bbb{R})$ and $L^p(\Bbb{C})$ spaces, and other spaces were studied in \cite{bb09,BBCP,bmp03,chen141,DE01,DSW97,kalmes07,mp10}.
We refer to these classic books \cite{bmbook,gpbook,kosticbook} on this subject.

Among other related notions in topological dynamics, recurrence is close to, but weaker than topological transitivity (hypercyclicity) in \cite{cmp}. It is well known that every transitive (hypercyclic) operator is recurrent on separable Banach spaces in \cite{cp12}.
However, this is not the case for topologically multiple recurrence, which is a stronger notion than recurrence. Indeed, in \cite{cp12} there exists a transitive weighted backward shift on $\ell^2(\Bbb Z)$, which is not topologically multiply recurrent.
We recall an operator $T$ is {\it topologically multiply recurrent} if for every positive integer $L$ and every nonempty open subset $U$ of $X$, there is some $n\in\Bbb{N}$ such that $U\cap T^{-n}U\cap T^{-2n}U\cap\cdot\cdot\cdot\cap T^{-Ln}U\neq\emptyset.$
If $L=1$, then $T$ is called {\it recurrent}, that is, the condition $U\cap T^{-n}U\neq\emptyset$ is satisfied.
In \cite{cp12}, Costakis and Parissis characterized topologically multiply recurrent weighted shifts
on $\ell^p(\Bbb{Z})$ in terms of the weight sequence. Since the discrete group $\Bbb{Z}$ is a special case of locally compact groups, we recently also investigated this notion in the setting of locally compact groups in \cite{chen16,chen162}.

A continuous, even and convex function $\Phi:{\Bbb R}\rightarrow {\Bbb R}$ is called a {\it Young function} if it satisfies $\Phi(0)=0$, $\Phi(t)>0$ for $t>0$, and $\lim_{t\rightarrow\infty}\Phi(t)=\infty$.
For a Young function $\Phi$, the complementary function $\Psi$ of $\Phi$ is given by
$$\Psi(y)=\sup\{x|y|-\Phi(x):x\geq 0\}\qquad(y\in\Bbb{R}),$$
which is also a Young function. If $\Psi$ is the complementary function $\Phi$, then
$\Phi$ is the complementary function $\Psi$, and they satisfy the Young inequality
$$xy\leq \Phi(x)+\Psi(y)\qquad(x,y\geq0).$$

Let $G$ be a locally compact group with identity $e$ and a right Haar measure $\lambda$. Then the {\it Orlicz space} $L^\Phi(G)$ is defined by
$$L^\Phi(G)=\left\{f:G\rightarrow \Bbb{C}: \int_G\Phi(\alpha|f|)d\lambda<\infty\ \text{for some $\alpha>0$} \right\}$$
where $f$ is a Borel measurable function. Then the Orlicz space is a Banach space under the Luxemburg norm $N_\Phi$
defined for $f\in L^\Phi(G)$ by
$$N_\Phi(f)=\inf\left\{k>0:\int_G\Phi\left(\frac{|f|}{k}\right)d\lambda\leq1\right\}.$$
The Orlicz space is a generalization of the usual Lebesegue space.
Indeed, if $\Phi(t)=\frac{|t|^p}{p}$, then $L^\Phi(G)$ is the Lebesegue space $L^p(G)$.
The interesting properties and structures of Orlicz spaces have been investigated intensely over the last several decades.
Indeed, weighted Orlicz algebras on locally compact groups were investigated in \cite{oo15}, which generalized the group algebras.
The properties $(T_{L^\Phi})$ and $(F_{L^\Phi})$ for Orlicz spaces $L^\Phi$ were studied by Tanaka in \cite{ta17} recently.
In \cite{pi17}, Piaggio also considered Orlicz spaces and the large scale geometry of Heintze groups.
Hence it is nature to tackle hypercyclicity and linear chaos on Orlicz spaces.
For more discussions and recent works on Orlicz spaces, see \cite{aa17,cl17,ha15,ta17,oo15,pi17,rr91}.

We note that a Banach space admits a hypercyclic operator if, and only if, it is separable and infinite-dimensional \cite{an97,bg99}. Hence we assume that $G$ is second countable and $\Phi$ is $\Delta_2$-regular in this paper.
A Young function is said to be $\Delta_2$-{\it regular} in \cite{rr91} if there exist a constant $M>0$ and $t_0>0$ such that $\Phi(2t)\leq M\Phi(t)$ for $t\geq t_0$ when $G$ is compact, and $\Phi(2t)\leq M\Phi(t)$ for all $t>0$ when $G$ is noncompact. For example, the Young functions $\Phi$ given by
$$\Phi(t)=\frac{|t|^p}{p}\quad(1\leq p<\infty),\qquad \mbox{and}\qquad\Phi(t)=|t|^\alpha(1+|\log|t||)\quad (\alpha>1)$$
are both $\Delta_2$-regular in \cite{aa17,rr91}. If $\Phi$ is $\Delta_2$-regular, then the space $C_c(G)$ of all continuous functions on $G$ with compact support is dense in $L^\Phi(G)$.

A bounded continuous function $w:G \rightarrow (0,\infty)$ is called a {\it weight} on $G$.
Let $a \in G$ and let $\delta_a$ be the unit point mass at $a$.
A {\it weighted translation} on $G$ is
a weighted convolution operator $T_{a,w}: L^\Phi(G)\longrightarrow L^\Phi(G)$ defined by
$$T_{a,w}(f)= wT_a(f) \qquad (f \in L^\Phi(G))$$
where $w$ is a weight on $G$ and $T_a(f)=f*\delta_a\in L^\Phi(G)$ is
the convolution:
$$(f*\delta_a)(x)= \int_{y\in G} f(xy^{-1})\delta_a(y) = f(xa^{-1})
\qquad (x\in G).$$
If $w^{-1}\in L^{\infty}(G)$, then we can define a self-map $S_{a,w}$ on $L^\Phi(G)$  by
$$S_{a,w}(h) = \frac{h}{w}* \delta_{a^{-1}} \qquad (h \in L^\Phi(G))$$ so that
$$T_{a,w}S_{a,w}(h) = h \qquad (h \in L^\Phi(G)).$$
We assume $w,w^{-1}\in L^\infty(G)$ throughout.

Since the weighted translation $T_{a,w}$ is generated by the group element $a$ and the weight $w$,
some elements $a\in G$ and weights $w$ should be excluded. For example,
if $\|w\|_{\infty}\leq1$, then $\|T_{a,w}\|\leq1$ and $T_{a,w}$ is never transitive (hypercyclic).
Also, it is proved in \cite{aa17} that
$T_{a,w}$ is not transitive if $a$ is a torsion element of $G$.
In both cases above, $T_{a,w}$ is never chaotic.
An element $a$ in a group $G$ is called a {\it torsion element} if
it is of finite order. In a locally compact group $G$, an element $a\in G$ is called {\it
periodic} (or {\it compact}) in \cite{cc11} if the closed subgroup $G(a)$
generated by $a$ is compact. We call an element in
$G$ {\it aperiodic} if it is not periodic. For discrete groups,
periodic and torsion elements are identical.

It is showed in \cite{cc11} that an element $a\in G$ is aperiodic if, and only if, for any compact set $K\subset G$, there exists some
$M\in\Bbb N$ such that $K\cap Ka^{\pm n}=\emptyset$ for all $n>M$.
We note that \cite{cc11} in many familiar non-discrete groups, including the additive group $\Bbb R^d$, the Heisenberg group and the affine
group, all elements except the identity are aperiodic.

In Section 2, we will first demonstrate the result of topologically multiple recurrence for $T_{a,w}$, generated by an aperiodic element $a\in G$, on $L^\Phi(G)$. Following the similar idea and proof, we characterize topologically transitive and mixing weighted translations $T_{a,w}$.
Based on the results in Section 2, a sufficient and necessary condition for $T_{a,w}$ to be chaotic is obtained in Section 3.

\section{recurrence on $L^\Phi(G)$}

First, we have some observations on the norm $N_\Phi$, which will be very useful when pursuing the arguments of the proofs
in the main results. Let $B$ be a Borel set of $G$ with $\lambda(B)>0$, and let $\chi_B$ be the characteristic function
of $B$. Then by a simple computation, we have
$$N_\Phi(\chi_B)=\frac{1}{\Phi^{-1}(\frac{1}{\lambda(B)})}$$
where $\Phi^{-1}(t)$ is the modulus of the preimage of a singleton $t$ under $\Phi$.
Besides, the norm of a function $f\in L^\Phi(G)$ is invariant under the translation by a group element $a\in G$.

\begin{lemma}\label{invariant}
Let $G$ be a locally compact group, and let $a\in G$. Let $\Phi$ be a Young function, and let $f\in L^\Phi(G)$.
Then we have
$$N_\Phi(f)=N_\Phi(f*\delta_a).$$
\end{lemma}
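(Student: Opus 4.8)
The plan is to reduce everything to the right-invariance of the Haar measure $\lambda$, since the Luxemburg norm is built from the integral $\int_G \Phi(|f|/k)\,d\lambda$ and the Young function $\Phi$ does not interact with the group translation at all. First I would fix an arbitrary $k>0$ and examine the integral associated with the translate,
$$\int_G \Phi\!\left(\frac{|f*\delta_a|}{k}\right)d\lambda.$$
Using the explicit convolution formula $(f*\delta_a)(x)=f(xa^{-1})$ recorded in the excerpt, the integrand becomes $\Phi(|f(xa^{-1})|/k)$, so the whole expression equals $\int_G \Phi(|f(xa^{-1})|/k)\,d\lambda(x)$.

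The key step is to invoke the right-invariance of $\lambda$: performing the change of variable $x\mapsto xa$ (equivalently, using $\lambda(Ea)=\lambda(E)$ for Borel sets $E$) leaves $d\lambda$ unchanged, so that
$$\int_G \Phi\!\left(\frac{|f(xa^{-1})|}{k}\right)d\lambda(x)=\int_G \Phi\!\left(\frac{|f(x)|}{k}\right)d\lambda(x)=\int_G \Phi\!\left(\frac{|f|}{k}\right)d\lambda.$$
Thus for every $k>0$ the integral attached to $f*\delta_a$ coincides with the one attached to $f$.

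Once this identity is in hand, the conclusion is immediate. The constraint set $\{k>0:\int_G\Phi(|f*\delta_a|/k)\,d\lambda\le 1\}$ coincides with $\{k>0:\int_G\Phi(|f|/k)\,d\lambda\le 1\}$, and since $N_\Phi$ is by definition the infimum over the respective set, taking infima of the two identical sets yields $N_\Phi(f*\delta_a)=N_\Phi(f)$, which is exactly the claim.

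I do not expect any serious obstacle here; the only points requiring care are bookkeeping ones. I should make sure the change of variable is applied with the correct handedness, since it is precisely right-invariance (and not left-invariance) of $\lambda$ that makes the substitution $x\mapsto xa$ measure-preserving; this is why the right Haar measure is the natural choice in this setting. I would also note in passing that $f*\delta_a\in L^\Phi(G)$, which is already guaranteed by the standing definition of $T_a$ in the excerpt, so the integral is finite for suitable $k$ and the infimum is genuinely taken over a nonempty set.
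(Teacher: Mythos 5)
Your proposal is correct and follows essentially the same route as the paper: both reduce the claim to the right-invariance of the Haar measure via the substitution $x \mapsto xa$ after writing $(f*\delta_a)(x) = f(xa^{-1})$, so that the constraint sets defining the two Luxemburg norms coincide. Your phrasing of the final step (equality of the sets over which the infimum is taken) is a slightly more explicit bookkeeping of what the paper expresses as a chain of equal infima, but the argument is identical.
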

\begin{proof}

By the right invariance of the Haar measure $\lambda$ and the definition of the norm
$$N_\Phi(f)=\inf\left\{k>0:\int_G\Phi\left(\frac{|f|}{k}\right)d\lambda\leq1\right\},$$
we have
\begin{eqnarray*}
N_\Phi(f*\delta_a)&=&\inf\left\{k>0:\int_G\Phi\left(\frac{|f*\delta_a|}{k}\right)d\lambda\leq1\right\}\\
&=&\inf\left\{k>0:\int_G\Phi\left(\frac{1}{k}\left|\int_Gf(xy^{-1})d\delta_a(y)\right|\right)d\lambda(x)\leq1\right\}\\
&=&\inf\left\{k>0:\int_G\Phi\left(\frac{1}{k}|f(xa^{-1})|\right)d\lambda(x)\leq1\right\}\\
&=&\inf\left\{k>0:\int_G\Phi\left(\frac{1}{k}|f(t)|\right)d\lambda(t)\leq1\right\}=N_\Phi(f)
\end{eqnarray*}
where $t=xa^{-1}$ and $d\lambda(t)=d\lambda(xa^{-1})=d\lambda(x)$.
\end{proof}

Now we are ready to prove the result of topologically multiple recurrence on $L^\Phi(G)$.

\begin{theorem}\label{tmr}
Let $G$ be a locally compact group, and let $a\in G$ be an aperiodic element. Let $w$ be a weight on $G$, and let
$\Phi$ be a Young function. Let $T_{a,w}$ be a weighted translation on $L^\Phi(G)$. Then the following conditions are equivalent.
\begin{enumerate}
\item[{\rm(i)}] $T_{a,w}$ is topologically multiply recurrent on $L^\Phi(G)$.
\item[{\rm(ii)}] For each $L\in \Bbb{N}$ and each compact subset $K \subset G$ with $\lambda(K) >0$, there is a
sequence of Borel sets $(E_{k})$ in $K$ such that
$\displaystyle\lambda(K) = \lim_{k \rightarrow \infty}
\lambda(E_{k})$ and both sequences {\rm(}for $1\leq l \leq L${\rm)}
$$\varphi_{ln}:=\prod_{j=1}^{ln}w\ast\delta_{a^{-1}}^{j}\ \ \ \  and \ \ \
\ \ {\widetilde \varphi_{ln}}:=\left(\prod_{j=0}^{ln-1}w\ast\delta_{a}^{j}\right)^{-1}$$
admit respectively subsequences $(\varphi_{ln_k})$ and $({\widetilde \varphi_{ln_k}})$ satisfying
$$\lim_{k\rightarrow \infty}\|\varphi_{ln_{k}}|_{_{E_{k}}}\|_\infty=
\lim_{k\rightarrow \infty}\|{\widetilde \varphi_{ln_{k}}}|_{_{E_{k}}}\|_\infty=0.$$
\end{enumerate}
\end{theorem}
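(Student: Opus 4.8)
The plan is to reduce everything to two clean norm identities and then argue each implication by a support-separation trick made available by aperiodicity. First I would record the iterate formulas. Writing $T=T_{a,w}$ and $S=S_{a,w}$, a direct computation gives $T^{m}f(x)=\widetilde\varphi_{m}(x)^{-1}f(xa^{-m})$, together with the analogous formula for $S$, so that in fact $S=T^{-1}$ (not merely a right inverse). The key point is the pair of change-of-variable identities
\[
T^{m}v=(\varphi_{m}\,v)\ast\delta_{a^{m}},\qquad S^{m}v=(\widetilde\varphi_{m}\,v)\ast\delta_{a^{-m}}\qquad(v\in L^\Phi(G)),
\]
which, combined with the translation invariance of $N_\Phi$ from Lemma \ref{invariant}, yield
\[
N_\Phi(T^{m}v)=N_\Phi(\varphi_{m}\,v),\qquad N_\Phi(S^{m}v)=N_\Phi(\widetilde\varphi_{m}\,v).
\]
These convert every statement about iterates into a statement about the restricted products $\varphi_{m}$ and $\widetilde\varphi_{m}$, and they drive both directions.

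For (ii)$\Rightarrow$(i), fix $L$ and a nonempty open $U$; choose $f\in U\cap C_c(G)$ (possible since $C_c(G)$ is dense) with compact support $K$. After passing to a common subsequence of the given $n_k$ serving all $1\le l\le L$ and both products, I set $v_k=f\chi_{E_k}$ and $g_k=\sum_{l=0}^{L}S^{ln_k}v_k$. Using $TS=ST=\mathrm{id}$ one gets $T^{jn_k}g_k=\sum_{l=0}^{L}T^{(j-l)n_k}v_k=v_k+\sum_{m=1}^{j}T^{mn_k}v_k+\sum_{m=1}^{L-j}S^{mn_k}v_k$, so by the norm identities each off-diagonal term is at most $\|\varphi_{mn_k}|_{E_k}\|_\infty\,N_\Phi(v_k)$ or $\|\widetilde\varphi_{mn_k}|_{E_k}\|_\infty\,N_\Phi(v_k)$ and hence tends to $0$; since also $N_\Phi(f-v_k)\le\|f\|_\infty/\Phi^{-1}(1/\lambda(K\setminus E_k))\to0$, both $g_k$ and all $T^{jn_k}g_k$ converge to $f$. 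For large $k$ this places $g_k\in U\cap\bigcap_{l=1}^{L}T^{-ln_k}U$, giving topological multiple recurrence.

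For (i)$\Rightarrow$(ii), fix $L$ and compact $K$ with $\lambda(K)>0$ and apply multiple recurrence to the shrinking balls $U_k=\{f:N_\Phi(f-\chi_K)<1/k\}$ to obtain $n_k$ and $g_k\in U_k$ with $T^{ln_k}g_k\in U_k$ for $1\le l\le L$; write $g_k=\chi_K+r_k$ and $T^{ln_k}g_k=\chi_K+s_k^{(l)}$ with the error norms tending to $0$. One first argues $n_k\to\infty$: a bounded subsequence would force $T^{ln}\chi_K=\chi_K$, i.e. $Ka^{ln}=K$ up to null sets, contradicting aperiodicity. With $n_k$ large, aperiodicity gives $Ka^{\pm ln_k}\cap K=\emptyset$ for $1\le l\le L$, and the translated supports decouple the pieces: since $T^{ln_k}(g_k\chi_K)$ is supported in $Ka^{ln_k}$, disjoint from both $K$ and $(G\setminus K)a^{ln_k}$, one finds $T^{ln_k}(g_k\chi_K)=s_k^{(l)}\chi_{Ka^{ln_k}}$, whence $N_\Phi(\varphi_{ln_k}\,g_k\chi_K)=N_\Phi(T^{ln_k}(g_k\chi_K))\to0$. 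Symmetrically, applying $S^{ln_k}$ to $q_k:=(T^{ln_k}g_k)\chi_K$ and using $ST=\mathrm{id}$ gives $S^{ln_k}(q_k)=g_k\chi_{Ka^{-ln_k}}=r_k\chi_{Ka^{-ln_k}}$, so $N_\Phi(\widetilde\varphi_{ln_k}\,q_k)\to0$. Since $g_k\chi_K$ and $q_k$ converge to $\chi_K$ in norm, hence in measure on $K$ (here $\Delta_2$-regularity is used), the products $\varphi_{ln_k}$ and $\widetilde\varphi_{ln_k}$ converge to $0$ in measure on $K$; choosing a threshold $\delta_k\to0$ slowly and letting $E_k$ be the subset of $K$ on which all $\varphi_{ln_k}$ and $\widetilde\varphi_{ln_k}$ $(1\le l\le L)$ stay below $\delta_k$ yields $\lambda(E_k)\to\lambda(K)$ together with the required uniform decay.

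I expect the necessity direction to be the main obstacle, for a structural reason: the operators $T^{ln_k}$ have norms growing like $\|w\|_\infty^{ln_k}$, so one can never afford to apply a high iterate to a merely norm-small function. The whole argument is therefore engineered so that each high power of $T$ or $S$ is applied either to $\chi_K$, whose image is understood exactly, or to a piece whose image, by the disjointness of $Ka^{\pm ln_k}$ from $K$, lands where the target is already negligible; this is precisely where aperiodicity is indispensable. The remaining delicate point is the final measure extraction: norm closeness of $g_k$ to $\chi_K$ only yields $g_k\approx1$ on most of $K$, so one must pass from the norm bounds on $\varphi_{ln_k}g_k\chi_K$ and $\widetilde\varphi_{ln_k}q_k$ to uniform bounds on a set of nearly full measure, which I would handle by a Chebyshev estimate through the identity $N_\Phi(\chi_B)=1/\Phi^{-1}(1/\lambda(B))$ and a diagonal choice of $\delta_k$.
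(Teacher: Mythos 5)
Your proposal is correct and follows essentially the same route as the paper: the sufficiency direction uses the identical candidate vector $\sum_{l=0}^{L}S_{a,w}^{ln_k}(f\chi_{E_k})$ with the same translation-invariance norm identities, and the necessity direction uses the same support-decoupling mechanism (aperiodicity makes $Ka^{\pm ln_k}$ disjoint from $K$, so $T_{a,w}^{ln_k}(g_k\chi_K)$ is forced to have small norm) followed by a Chebyshev-type extraction through $N_\Phi(\chi_B)=1/\Phi^{-1}(1/\lambda(B))$, exactly as in the paper's estimates for the exceptional sets $C_{l,m}$ and $D_{l,m}$. The only differences are presentational --- you organize the extraction as convergence in measure plus a diagonal choice of thresholds where the paper bounds explicit exceptional sets for a fixed $\varepsilon$ --- and one compressed step: your claim that a bounded subsequence of $(n_k)$ would contradict aperiodicity needs an extra iteration, since $Ka^{ln}=K$ up to null sets for a \emph{fixed} $n$ does not by itself violate the aperiodicity criterion; one must first deduce $Ka^{jln}=K$ up to null sets for all $j\geq 1$ and then take $j$ large.
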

\begin{proof}
(i) $\Rightarrow$ (ii). Let $T_{a,w}$ be topologically multiply recurrent. Let
$K\subset G$ be a compact set with $\lambda (K)
>0$. Let $\varepsilon\in(0,1)$. By aperiodicity of $a$, there is some $M$ such that $K\cap Ka^{\pm n}=\emptyset$
for $n> M$. Let $\chi_{K}\in L^\Phi(G)$ be the characteristic
function of $K$.

Let $U=\{g\in L^\Phi(G):N_\Phi(g-\chi_{K})<\varepsilon^2\}$. Given some $L\in\Bbb N$,
there exists
$m>M$ such that
$$U\cap T_{a,w}^{-m}U\cap T_{a,w}^{-2m}U\cap\cdot\cdot\cdot\cap T_{a,w}^{-Lm}U\neq\emptyset$$
by the assumption of topologically multiple recurrence.
Hence there exists $f\in L^\Phi(G)$ such that
$$N_\Phi(f-\chi_{K}) < \varepsilon^2 \quad \mbox{and} \quad N_\Phi(T_{a,w}^{lm}f-\chi_{K})< \varepsilon^2$$
for $l=1,2,\ldots,L$.
Let $$A=\{x\in K:|f(x)-1|\geq\varepsilon\}.$$
Then
$$|f(x)|>1-\varepsilon \qquad ( x\in K\setminus A)\qquad \mbox{and}\qquad \lambda(A)<\frac{1}{\Phi(\frac{1}{\varepsilon})}$$
by
\begin{eqnarray*}
\varepsilon^2 &>& N_\Phi(f-\chi_K)\\
&\geq& N_\Phi\left(\chi_K(f-1)\right)\\
&\geq& N_\Phi\left(\chi_{A}(f-1)\right)\\
&\geq& N_\Phi(\chi_{A}\varepsilon)\\
&=& \frac{\varepsilon}{\Phi^{-1}(\frac{1}{\lambda(A)})}
\end{eqnarray*}
which implies $\lambda(A)<\frac{1}{\Phi(\frac{1}{\varepsilon})}$.
Let $$B_{l,m}=\{x\in K:|T_{a,w}^{lm}f(x)-1|\geq\varepsilon\}.$$
Then
$$|T_{a,w}^{lm}f(x)|>1-\varepsilon \qquad ( x\in K\setminus B_{l,m})\qquad \mbox{and}\qquad \lambda(B_{l,m})<\frac{1}{\Phi(\frac{1}{\varepsilon})}$$
by the following estimate
\begin{eqnarray*}
\varepsilon^2 &>& N_\Phi(T_{a,w}^{lm}f-\chi_K)\\
&\geq& N_\Phi\left(\chi_K(T_{a,w}^{lm}f-1)\right)\\
&\geq& N_\Phi\left(\chi_{B_{l,m}}(T_{a,w}^{lm}f-1)\right)\\
&\geq& N_\Phi(\chi_{B_{l,m}}\varepsilon)\\
&=& \frac{\varepsilon}{\Phi^{-1}(\frac{1}{\lambda(B_{l,m})})}.
\end{eqnarray*}
Let $$C_{l,m}=\{x\in K\setminus A:\varphi_{lm}(x)\geq \varepsilon\}.$$
Then
$$\varphi_{lm}(x) < \varepsilon\qquad(x\in K\setminus (A\cup C_{lm}))$$
and $\lambda(C_{l,m})<\frac{1}{\Phi(\frac{1-\varepsilon}{\varepsilon})}$.
Indeed, by Lemma \ref{invariant}, the right invariance of the Haar measure $\lambda$ and $K\cap Ka^{\pm m}=\emptyset$, we have
\begin{eqnarray*}
\varepsilon^2 &>& N_\Phi(T_{a,w}^{lm}f-\chi_K)\\
&\geq& N_\Phi\left(\chi_{C_{l,m}a^{lm}}(T_{a,w}^{lm}f)\right)\\
&=& N_\Phi\left(\chi_{C_{l,m}a^{lm}}(\prod_{j=0}^{lm-1}w*\delta_{a}^j)(f*\delta_{a^{lm}})\right)\\
&=& N_\Phi\left(\chi_{C_{l,m}}(\prod_{j=1}^{lm}w*\delta_{a^{-1}}^j)f\right)\\
&=& N_\Phi(\chi_{C_{l,m}}\varphi_{lm}f)\\
&>& \frac{\varepsilon(1-\varepsilon)}{N^{-1}_\Phi(\frac{1}{\lambda(C_{l,m})})}
\end{eqnarray*}
which implies
$\lambda(C_{l,m})<\frac{1}{\Phi(\frac{1-\varepsilon}{\varepsilon})}$.
Let $$D_{l,m}=\{x\in K\setminus B_{l,m}:\widetilde{\varphi}_{lm}(x) \geq\varepsilon\}.$$
Then
$$\widetilde{\varphi}_{lm}(x) < \varepsilon\qquad(x\in K\setminus (B_{l,m}\cup D_{l,m}))$$
and $\lambda(D_{l,m})<\frac{1}{\Phi(\frac{1-\varepsilon}{\varepsilon})}$. Again, we have

\begin{eqnarray*}
\varepsilon^2 &>& N_\Phi(f-\chi_K)\\
&\geq& N_\Phi\left(\chi_{D_{l,m}a^{-lm}}(S_{a,w}^{lm}T_{a,w}^{lm}f)\right)\\
&=& N_\Phi\left(\chi_{D_{l,m}a^{-lm}}(\prod_{j=1}^{lm}w*\delta_{a^{-1}}^j)^{-1}((T_{a,w}^{lm}f)*\delta_{a^{-lm}})\right)\\
&=& N_\Phi\left(\chi_{D_{l,m}}(\prod_{j=0}^{lm-1}w*\delta_{a}^j)^{-1}(T_{a,w}^{lm}f)\right)\\
&=& N_\Phi\left(\chi_{D_{l,m}}\widetilde{\varphi}_{lm}(T_{a,w}^{lm}f)\right)\\
&>& \frac{\varepsilon(1-\varepsilon)}{N^{-1}_\Phi(\frac{1}{\lambda(D_{l,m})})}
\end{eqnarray*}
which implies
$\lambda(D_{l,m})<\frac{1}{\Phi(\frac{1-\varepsilon}{\varepsilon})}.$
Let $$E_{m}=(K\setminus A) \setminus \displaystyle\bigcup_{l=1}^L (B_{l,m}\cup C_{l,m}\cup D_{l,m}).$$
Then we have
$$\lambda(K\setminus E_{m})<\frac{1+L}{\Phi(\frac{1}{\varepsilon})}+\frac{2L}{\Phi(\frac{1-\varepsilon}{\varepsilon})}$$
and
$$\|\varphi_{lm}|_{_{E_{m}}}\|_\infty<\varepsilon, \qquad \|{\widetilde \varphi_{lm}}|_{_{E_{m}}}\|_\infty<\varepsilon,$$
which implies condition (ii) together with the fact $\lim_{t\rightarrow\infty}\Phi(t)=\infty$.

(ii) $\Rightarrow$ (i). We show that $T_{a,w}$ is topologically multiply recurrent.
Let $U$ be a non-empty open subset of $L^\Phi(G)$.
Since the space $C_c(G)$ of continuous functions on $G$ with
compact support is dense in $L^\Phi(G)$, we can pick $f\in C_c(G)$
with $f\in U$. Let $K$ be the compact
support of $f$. Given some $L\in \Bbb{N}$, let $E_k\subset K$ and the sequences $(\varphi_{ln}), ({\widetilde \varphi_{ln}})$
satisfy condition (ii).

By aperiodicity of $a$, there exists $M \in \Bbb{N}$ such that
$K\cap Ka^{\pm n}=\emptyset$ for all $n> M$.
By condition (ii), there exists $M' \in \Bbb{N}$ such that $n_k >M$ and
$\varphi_{ln_{k}}, \widetilde{\varphi}_{ln_{k}} <\frac{\frac{1}{2^k}}{\| f \|_{\infty}}$ for $k> M'$. Hence, using Lemma \ref{invariant}, we obtain
\begin{eqnarray*}
&&N_{\Phi}\left(T_{a,w}^{ln_k}(f\chi_{E_k})\right)\\
&=&N_{\Phi}\left((\prod_{j=0}^{ln_k-1}w*\delta_{a}^j)(f*\delta_{a^{ln_k}})(\chi_{E_k}*\delta_{a^{ln_k}})\right)\\
&=&N_{\Phi}\left((\prod_{j=1}^{ln_k}w*\delta_{a^{-1}}^j)f\chi_{E_k}\right)\\
&=&N_{\Phi}\left(\varphi_{ln_k}f\chi_{E_k}\right)\\
&<&\frac{\|f\|_{\infty}\frac{\frac{1}{2^k}}{\|f\|_{\infty}}}{\Phi^{-1}(\frac{1}{\lambda(E_k)})}\rightarrow 0
\end{eqnarray*}
as $k\rightarrow \infty$ for $1\leq l\leq L$. Similarly, by the sequence $(\widetilde \varphi_{ln_k})$,
\begin{eqnarray*}
&&\lim_{k\rightarrow \infty}N_{\Phi}\left(S_{a,w}^{ln_k}(f\chi_{E_k})\right)\\
&=&\lim_{k\rightarrow \infty}N_{\Phi}\left((\prod_{j=1}^{ln_k}w*\delta_{a^{-1}}^j)^{-1}(f*\delta_{a^{-ln_k}})(\chi_{E_k}*\delta_{a^{-ln_k}})\right)\\
&=&\lim_{k\rightarrow \infty}N_{\Phi}\left((\prod_{j=0}^{ln_k-1}w*\delta_{a}^j)^{-1}f\chi_{E_k}\right)\\
&=&\lim_{k\rightarrow \infty}N_{\Phi}\left(\widetilde{\varphi}_{ln_k}f\chi_{E_k}\right)=0
\end{eqnarray*}
for $l=1,2,...,L$.

Now we are ready to achieve our goal. For each $k\in \Bbb{N}$, we let
$$v_k=f\chi_{E_k}+S_{a,w}^{n_k}(f\chi_{E_k})+S_{a,w}^{2n_k}(f\chi_{E_k})+\cdot\cdot\cdot+S_{a,w}^{Ln_k}(f\chi_{E_k}).$$
Then
$$N_{\Phi}(v_k-f)\leq N_{\Phi}(f\chi_{K\setminus E_k})+\sum_{l=1}^{L}N_{\Phi}\left(S_{a,w}^{ln_k}(f\chi_{E_k})\right)$$
and
\begin{eqnarray*}
N_{\Phi}(T_{a,w}^{ln_k}v_k-f)&\leq& N_{\Phi}\left(T_{a,w}^{ln_k}(f\chi_{E_k})\right)+N_{\Phi}\left(T_{a,w}^{(l-1)n_k}(f\chi_{E_k})\right)
+\cdot\cdot\cdot+N_{\Phi}\left(T_{a,w}^{n_k}(f\chi_{E_k})\right)\\
&+& N_{\Phi}(f\chi_{K\setminus E_k})+N_{\Phi}\left(S_{a,w}^{n_k}(f\chi_{E_k})\right)
+\cdot\cdot\cdot+N_{\Phi}\left(S_{a,w}^{(L-l)n_k}(f\chi_{E_k})\right),
\end{eqnarray*}
which implies $\displaystyle\lim_{k\rightarrow\infty}N_{\Phi}(v_k-f)=\displaystyle\lim_{k\rightarrow\infty}N_{\Phi}(T_{a,w}^{ln_k}v_k-f)=0$. Hence
$$U\cap T_{a,w}^{-n_k}U\cap T_{a,w}^{-2n_k}U\cap\cdot\cdot\cdot\cap T_{a,w}^{-Ln_k}U\neq\emptyset.$$
\end{proof}

As in \cite[Example 2.2, 2.4, 2.5]{chen16}, it is not difficult to find the weight satisfying the above weight condition
on various locally compact groups. For completeness, we include one example for Heisenberg groups only.

\begin{example}\label{example}
Let $$G=\Bbb{H}:=\left\{\left(\begin{matrix}
1 & x & z\\
0 & 1 & y\\
0 & 0 & 1
\end{matrix}\right):x,y,z\in \Bbb{R}\right\}$$
be the Heisenberg group which is neither abelian nor compact. For
convenience, an element in $G$ is written as $(x,y,z)$. Let $(x,y,z),(x',y',z')\in \Bbb{H}$.
Then the multiplication is given by $$(x,y,z)\cdot(x',y',z')=(x+x',y+y',z+z'+xy')$$ and
$$(x,y,z)^{-1}=(-x,-y,xy-z).$$

Let $a=(3,0,2)$ and $w$ be a weight on $\Bbb{H}$.
Then $a^{-1}=(-3,0,-2)$ and the weighted translation $T_{(3,0,2),w}$
on $L^\Phi(\Bbb{H})$ is given by
$$T_{(3,0,2),w}f(x,y,z)=w(x,y,z)f(x-3,y,z-2)\qquad(f\in L^p(\Bbb{H})).$$
By Theorem \ref{tmr}, the operator $T_{(3,0,2),w}$
is topologically multiply recurrent if given $\varepsilon>0$, some $L\in\Bbb{N}$ and a compact subset $K$ of $\Bbb{H}$,
there exists a positive integer $n$ such that for $1\leq l \leq L$ and $x\in K$, we have
$$\varphi_{ln}(x,y,z)=\prod_{s=1}^{ln}w\ast\delta_{_{(3,0,2)^{-1}}}^{s}(x,y,z)=\prod_{s=1}^{ln}w(x+3s,y,z+2s)<\varepsilon$$
and
$${\widetilde\varphi_{ln}}^{^{-1}}(x,y,z)=\prod_{s=0}^{ln-1}w\ast\delta_{_{(3,0,2)}}^{s}(x)=\prod_{s=0}^{ln-1}w(x-3s,y,z-2s)>\frac{1}{\varepsilon}.$$
One can obtain the required weight condition by defining $w:\Bbb{H}\rightarrow (0,\infty)$ as follows:
$$w(x,y,z)=\left\{
\begin{array}{ll}
\frac{1}{2}& \mbox{ if } z\geq 1\\\\
\frac{1}{2^z}& \mbox{ if } -1<z< 1\\\\
2 & \mbox{ if } z\leq-1.
\end{array}
\right.$$
\end{example}

If $L=1$ in the Theorem \ref{tmr}, then the characterization for recurrence follows immediately.

\begin{corollary}\label{recurrence}
Let $G$ be a locally compact group, and let $a\in G$ be an aperiodic element. Let $w$ be a weight on $G$, and let
$\Phi$ be a Young function. Let $T_{a,w}$ be a weighted translation on $L^\Phi(G)$. Then the following conditions are equivalent.
\begin{enumerate}
\item[{\rm(i)}] $T_{a,w}$ is recurrent on $L^\Phi(G)$.
\item[{\rm(ii)}] For each compact subset $K \subset G$ with $\lambda(K) >0$, there is a
sequence of Borel sets $(E_{k})$ in $K$ such that
$\displaystyle\lambda(K) = \lim_{k \rightarrow \infty}
\lambda(E_{k})$ and both sequences
$$\varphi_{n}:=\prod_{j=1}^{n}w\ast\delta_{a^{-1}}^{j}\ \ \ \  and \ \ \
\ \ {\widetilde \varphi_{n}}:=\left(\prod_{j=0}^{n-1}w\ast\delta_{a}^{j}\right)^{-1}$$
admit respectively subsequences $(\varphi_{n_k})$ and $({\widetilde \varphi_{n_k}})$ satisfying
$$\lim_{k\rightarrow \infty}\|\varphi_{n_{k}}|_{_{E_{k}}}\|_\infty=
\lim_{k\rightarrow \infty}\|{\widetilde \varphi_{n_{k}}}|_{_{E_{k}}}\|_\infty=0.$$
\end{enumerate}
\end{corollary}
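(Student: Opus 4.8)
The plan is to derive this as the immediate specialization of Theorem~\ref{tmr} to the case $L=1$. First I would recall that in the introduction \emph{recurrence} is defined precisely as the $L=1$ instance of topological multiple recurrence, i.e.\ the requirement that $U\cap T_{a,w}^{-n}U\neq\emptyset$ for some $n\in\Bbb{N}$. Consequently condition (i) of the corollary coincides verbatim with condition (i) of Theorem~\ref{tmr} once we fix $L=1$, so nothing needs to be reproved on the dynamical side.

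Next I would inspect condition (ii). With $L=1$ the index $l$ ranges only over the single value $l=1$, so the doubly-indexed families $\varphi_{ln}$ and $\widetilde{\varphi}_{ln}$ collapse to the singly-indexed sequences $\varphi_n=\prod_{j=1}^{n}w*\delta_{a^{-1}}^{j}$ and $\widetilde{\varphi}_n=\left(\prod_{j=0}^{n-1}w*\delta_{a}^{j}\right)^{-1}$, which are exactly the objects named in condition (ii) of the corollary. Hence the weight condition of Theorem~\ref{tmr} at $L=1$ is word-for-word condition (ii) here, including the normalization $\lambda(K)=\lim_{k\to\infty}\lambda(E_k)$ and the vanishing of the supremum norms $\|\varphi_{n_k}|_{E_k}\|_\infty$ and $\|\widetilde{\varphi}_{n_k}|_{E_k}\|_\infty$ along subsequences.

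Since both conditions of the corollary are simply the $L=1$ restrictions of the corresponding conditions of Theorem~\ref{tmr}, the equivalence (i)~$\Leftrightarrow$~(ii) follows at once from that theorem, and a direct repetition of its two implications with $L=1$ would be possible but superfluous. There is no substantive obstacle here: the only points to verify are that the introduction's definition of recurrence is literally the $L=1$ case of the multiple-recurrence definition, and that the product notation reduces correctly when $l=1$, both of which are transparent.
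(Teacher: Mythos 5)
Your proposal is correct and coincides with the paper's own treatment: the paper offers no separate argument for Corollary~\ref{recurrence}, stating only that it follows immediately by setting $L=1$ in Theorem~\ref{tmr}. Your additional remark that one could, if desired, rerun the theorem's two implications with $L$ fixed at $1$ is exactly the (implicit) justification the paper relies on, since the proof of Theorem~\ref{tmr} proceeds level-by-level in $L$.
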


Following the similar argument as in the proof of Theorem \ref{tmr}, one can characterize topological transitivity (hypercyclicity)
and mixing for weighted translations $T_{a,w}$ on $L^\Phi(G)$,
which recovers the result of \cite[Theorem 2.3, Corollary 2.5]{aa17} where a different approach was applied.

\begin{corollary}\label{transitivity}
Let $G$ be a locally compact group, and let $a\in G$ be an aperiodic element. Let $w$ be a weight on $G$, and let
$\Phi$ be a Young function. Let $T_{a,w}$ be a weighted translation on $L^\Phi(G)$. Then the following conditions are equivalent.
\begin{enumerate}
\item[{\rm(i)}] $T_{a,w}$ is topologically transitive on $L^\Phi(G)$.
\item[{\rm(ii)}] For each compact subset $K \subset G$ with $\lambda(K) >0$, there is a
sequence of Borel sets $(E_{k})$ in $K$ such that
$\displaystyle\lambda(K) = \lim_{k \rightarrow \infty}
\lambda(E_{k})$ and both sequences
$$\varphi_{n}:=\prod_{j=1}^{n}w\ast\delta_{a^{-1}}^{j}\ \ \ \  and \ \ \
\ \ {\widetilde \varphi_{n}}:=\left(\prod_{j=0}^{n-1}w\ast\delta_{a}^{j}\right)^{-1}$$
admit respectively subsequences $(\varphi_{n_k})$ and $({\widetilde \varphi_{n_k}})$ satisfying
$$\lim_{k\rightarrow \infty}\|\varphi_{n_{k}}|_{_{E_{k}}}\|_\infty=
\lim_{k\rightarrow \infty}\|{\widetilde \varphi_{n_{k}}}|_{_{E_{k}}}\|_\infty=0.$$
\end{enumerate}
\end{corollary}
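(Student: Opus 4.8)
The plan is to prove both implications by running the two halves of Theorem \ref{tmr} almost verbatim, specializing the recurrence argument to the transitive setting and replacing the single open set $U$ by the given pair $U,V$.

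For (i) $\Rightarrow$ (ii), I would fix a compact $K$ with $\lambda(K)>0$ and an $\varepsilon\in(0,1)$, set $U=V=\{g\in L^\Phi(G):N_\Phi(g-\chi_K)<\varepsilon^2\}$, and use aperiodicity of $a$ to pick $M$ with $K\cap Ka^{\pm n}=\emptyset$ for $n>M$. Topological transitivity then yields some $m>M$ and $f\in U$ with $T_{a,w}^m f\in V$, that is, $N_\Phi(f-\chi_K)<\varepsilon^2$ and $N_\Phi(T_{a,w}^m f-\chi_K)<\varepsilon^2$. This is precisely the data produced by multiple recurrence in Theorem \ref{tmr} when $L=1$, so I would introduce the same four exceptional sets $A$, $B_m$, $C_m$, $D_m$, reuse the identical Luxemburg-norm estimates (through $N_\Phi(\chi_B)=1/\Phi^{-1}(1/\lambda(B))$ and Lemma \ref{invariant}) to bound their measures by $1/\Phi(1/\varepsilon)$ and $1/\Phi((1-\varepsilon)/\varepsilon)$, and put $E_m=(K\setminus A)\setminus(B_m\cup C_m\cup D_m)$. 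Letting $\varepsilon\to 0$ and invoking $\lim_{t\to\infty}\Phi(t)=\infty$ extracts the required sequence $(E_k)$ with $\lambda(E_k)\to\lambda(K)$ and $\|\varphi_{n_k}|_{E_k}\|_\infty,\ \|\widetilde{\varphi}_{n_k}|_{E_k}\|_\infty\to 0$.

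For (ii) $\Rightarrow$ (i), given nonempty open sets $U,V$, I would use density of $C_c(G)$ in $L^\Phi(G)$ to pick $f\in U$ and $g\in V$ in $C_c(G)$, and choose a single compact $K$ with $\lambda(K)>0$ containing both supports. Condition (ii) then supplies Borel sets $E_k\subset K$ and an index sequence $(n_k)$, and aperiodicity lets me also demand $n_k>M$. The decisive point is that the same family $(E_k)$ serves both $f$ and $g$, because (ii) depends only on $K$. Setting $u_k=f\chi_{E_k}+S_{a,w}^{n_k}(g\chi_{E_k})$ and using the right-inverse identity $T_{a,w}^{n_k}S_{a,w}^{n_k}=I$, I obtain $T_{a,w}^{n_k}u_k=T_{a,w}^{n_k}(f\chi_{E_k})+g\chi_{E_k}$. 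Exactly as in Theorem \ref{tmr}, Lemma \ref{invariant} gives $N_\Phi\big(T_{a,w}^{n_k}(f\chi_{E_k})\big)=N_\Phi(\varphi_{n_k}f\chi_{E_k})\to 0$ and $N_\Phi\big(S_{a,w}^{n_k}(g\chi_{E_k})\big)=N_\Phi(\widetilde{\varphi}_{n_k}g\chi_{E_k})\to 0$, since $f,g\in L^\infty(G)$ and the restricted sup-norms of $\varphi_{n_k},\widetilde{\varphi}_{n_k}$ tend to $0$ on $E_k$. Combined with $N_\Phi(f\chi_{K\setminus E_k})\to 0$, which follows from $\lambda(K\setminus E_k)\to 0$, the triangle inequality yields $u_k\to f$ and $T_{a,w}^{n_k}u_k\to g$. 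Hence $u_k\in U$ and $T_{a,w}^{n_k}u_k\in V$ for all large $k$, so $T_{a,w}^{n_k}U\cap V\neq\emptyset$ and $T_{a,w}$ is topologically transitive.

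I expect no substantial new obstacle here. Unlike the recurrence direction, I no longer need the cross terms of a long sum to vanish via disjoint supports: the telescoping that collapsed the multiple-recurrence sum is replaced by the single identity $T_{a,w}^{n_k}S_{a,w}^{n_k}=I$ bridging $U$ and $V$. The only genuinely delicate matter is organizational rather than computational, namely forcing one sequence $(E_k)$ to work simultaneously for both $f$ and $g$, which is handled by enlarging $K$ to contain both supports; everything else is a line-by-line transcription of the $L=1$ case of Theorem \ref{tmr}.
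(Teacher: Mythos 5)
Your proposal is correct and follows essentially the same route as the paper: the forward direction repeats the $L=1$ exceptional-set argument ($A$, $B_m$, $C_m$, $D_m$) from Theorem \ref{tmr}, and the reverse direction uses exactly the paper's test vector $v_k=f\chi_{E_k}+S_{a,w}^{n_k}(g\chi_{E_k})$ with the identity $T_{a,w}^{n_k}S_{a,w}^{n_k}=I$ and Lemma \ref{invariant}. If anything, you are slightly more careful than the paper in two spots it leaves implicit: justifying that transitivity supplies a return time $m>M$ (via infinitude of return sets on a separable Banach space), and taking $K$ to be a single compact set containing both supports so that one sequence $(E_k)$ serves $f$ and $g$ simultaneously.
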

\begin{proof}
(i) $\Rightarrow$ (ii). Let $T_{a,w}$ be topologically transitive.
By the assumption of transitivity, there exist $f\in L^\Phi(G)$ and some $m\in\Bbb{N}$ such that
$$N_\Phi(f-\chi_{K}) < \varepsilon^2 \quad \mbox{and} \quad N_\Phi(T_{a,w}^{m}f-\chi_{K})< \varepsilon^2.$$
Following the similar arguments as in the proof of Theorem \ref{tmr}, one can obtain
$$\|\varphi_{m}|_{_{E_{m}}}\|_\infty<\varepsilon, \qquad \|{\widetilde \varphi_{m}}|_{_{E_{m}}}\|_\infty<\varepsilon,$$
which implies condition (ii).

(ii) $\Rightarrow$ (i).
Let $U,V$ be non-empty open subsets of $L^\Phi(G)$.
Then we can pick $f,g\in C_c(G)$ with $f\in U$ and $g\in V$. Let $K$ be the compact
support of $f$ and $g$, and let
$$v_k=f\chi_{E_k}+S_{a,w}^{n_k}(g\chi_{E_k}).$$
Then
$$N_{\Phi}(v_k-f)\leq N_{\Phi}(f\chi_{K\setminus E_k})+S_{a,w}^{n_k}(g\chi_{E_k})\rightarrow0$$
and
$$N_{\Phi}(T_{a,w}^{n_k}v_k-g) \leq N_{\Phi}(T_{a,w}^{n_k}(f\chi_{E_k}))+N_{\Phi}(g\chi_{K\setminus E_k})\rightarrow0,$$
as $k\rightarrow\infty$. Hence
$$T_{a,w}^{n_k}U\cap V\neq\emptyset.$$
\end{proof}

\begin{remark}
By Corollary \ref{recurrence} and Corollary \ref{transitivity}, one can deduce that a weighted translation operator $T_{a,w}$ on $L^\Phi(G)$ is topologically transitive (hypercyclic) if, and only if, it is recurrent.
\end{remark}

\begin{corollary}\label{mixing}
Let $G$ be a locally compact group, and let $a\in G$ be an aperiodic element. Let $w$ be a weight on $G$, and let
$\Phi$ be a Young function. Let $T_{a,w}$ be a weighted translation on $L^\Phi(G)$. Then the following conditions are equivalent.
\begin{enumerate}
\item[{\rm(i)}] $T_{a,w}$ is topologically mixing on $L^\Phi(G)$.
\item[{\rm(ii)}] For each compact subset $K \subset G$ with $\lambda(K) >0$, there is a
sequence of Borel sets $(E_n)$ in $K$ such that
$\displaystyle\lambda(K) = \lim_{n \rightarrow \infty}
\lambda(E_n)$ and both sequences
$$\varphi_{n}:=\prod_{j=1}^{n}w\ast\delta_{a^{-1}}^{j}\ \ \ \  and \ \ \
\ \ {\widetilde \varphi_{n}}:=\left(\prod_{j=0}^{n-1}w\ast\delta_{a}^{j}\right)^{-1}$$
satisfy
$$\lim_{n\rightarrow \infty}\|\varphi_{n}|_{_{E_{n}}}\|_\infty=\lim_{n\rightarrow \infty}\|{\widetilde \varphi_{n}}|_{_{E_{n}}}\|_\infty=0.$$
\end{enumerate}
\end{corollary}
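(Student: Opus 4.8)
The plan is to follow the proofs of Theorem \ref{tmr} (specialized to $L=1$) and Corollary \ref{transitivity} almost verbatim at the level of the four measure estimates, upgrading every ``there exists some $n$'' / ``along a subsequence'' assertion to ``for all sufficiently large $n$'' / ``along the full sequence''. This is precisely the gap separating transitivity from mixing, so the only genuinely new work lies in manufacturing a single sequence $(E_n)$ whose index matches the operator powers and whose bounds tend to $0$.

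For (i) $\Rightarrow$ (ii), I would fix a compact $K$ with $\lambda(K)>0$ and an $\varepsilon\in(0,1)$. By aperiodicity pick $M$ with $K\cap Ka^{\pm n}=\emptyset$ for $n>M$, and put $U=\{g:N_\Phi(g-\chi_K)<\varepsilon^2\}$. Topological mixing furnishes $N_0$ with $T_{a,w}^n U\cap U\neq\emptyset$ for every $n\geq N_0$; hence for each $n\geq\max\{M,N_0\}$ there is $f_n$ satisfying $N_\Phi(f_n-\chi_K)<\varepsilon^2$ and $N_\Phi(T_{a,w}^n f_n-\chi_K)<\varepsilon^2$. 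Feeding this single $f_n$ into the four estimates of Theorem \ref{tmr} produces sets $A_n,B_n,C_n,D_n\subset K$ with $\lambda(A_n),\lambda(B_n)<\frac{1}{\Phi(1/\varepsilon)}$ and $\lambda(C_n),\lambda(D_n)<\frac{1}{\Phi((1-\varepsilon)/\varepsilon)}$; setting $E_n:=(K\setminus A_n)\setminus(B_n\cup C_n\cup D_n)$ gives $\lambda(K\setminus E_n)<\frac{2}{\Phi(1/\varepsilon)}+\frac{2}{\Phi((1-\varepsilon)/\varepsilon)}$ together with $\|\varphi_n|_{E_n}\|_\infty<\varepsilon$ and $\|\widetilde\varphi_n|_{E_n}\|_\infty<\varepsilon$ for all $n\geq\max\{M,N_0\}$.

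The step I expect to be the main obstacle is reconciling this $\varepsilon$-indexed family with the single sequence demanded by (ii), since the argument only controls a tail $n\geq N_0(\varepsilon)$ for each fixed $\varepsilon$. I would resolve it by diagonalization: choose $\varepsilon_j\downarrow 0$, take the corresponding thresholds $N_0(\varepsilon_j)$ strictly increasing, and for $n$ in the block $N_0(\varepsilon_j)\leq n< N_0(\varepsilon_{j+1})$ let $E_n$ be the set constructed above for the parameter $\varepsilon_j$. As $n\to\infty$ the governing parameter $\varepsilon_j\to0$, whence $\|\varphi_n|_{E_n}\|_\infty,\|\widetilde\varphi_n|_{E_n}\|_\infty\to0$, and since $\lim_{t\to\infty}\Phi(t)=\infty$ the bound on $\lambda(K\setminus E_n)$ forces $\lambda(E_n)\to\lambda(K)$; this is exactly (ii) for the full sequences.

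For (ii) $\Rightarrow$ (i), I would strengthen the proof of Corollary \ref{transitivity} with no essential new difficulty. Given nonempty open $U,V$, choose $f,g\in C_c(G)$ with $f\in U$, $g\in V$, let $K$ be a compact set containing the supports of $f$ and $g$, take $(E_n)$ from (ii), and set $v_n=f\chi_{E_n}+S_{a,w}^n(g\chi_{E_n})$. Using Lemma \ref{invariant} and the right invariance of $\lambda$ exactly as before gives $N_\Phi(T_{a,w}^n(g\chi_{E_n}))=N_\Phi(\varphi_n\,g\chi_{E_n})$ and $N_\Phi(S_{a,w}^n(g\chi_{E_n}))=N_\Phi(\widetilde\varphi_n\,g\chi_{E_n})$, so the two triangle-inequality estimates yield $N_\Phi(v_n-f)\to0$ and $N_\Phi(T_{a,w}^n v_n-g)\to0$. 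The decisive difference from the transitive case is that (ii) now holds along the full sequence, so both limits run over all $n$; consequently $T_{a,w}^n U\cap V\neq\emptyset$ for every sufficiently large $n$, which is topological mixing.
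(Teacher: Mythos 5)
Your proposal is correct and follows exactly the route the paper takes: its proof of Corollary \ref{mixing} is precisely ``run the Corollary \ref{transitivity} argument (itself the $L=1$ case of Theorem \ref{tmr}) with the full sequence $(n)$ in place of the subsequence $(n_k)$,'' which is what you do, with the $\varepsilon$-diagonalization in (i) $\Rightarrow$ (ii) spelling out a step the paper leaves implicit.
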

\begin{proof}
The proof is similar to that of Corollary \ref{transitivity} by using the full sequence $(n)$ instead of subsequence $(n_k)$.
\end{proof}

\section{chaotic condition}

Applying Corollary \ref{transitivity}, in this section, we will give a sufficient and necessary condition for $T_{a,w}$
to be chaotic on the Orlicz space $L^\Phi(G)$. This result and Corollary \ref{mixing} entail one further consequence, that is, chaos and mixing imply topological multiple recurrence on $T_{a,w}$ in our case.

\begin{theorem}\label{chaos}
Let $G$ be a locally compact group, and let $a\in G$ be an aperiodic element. Let $w$ be a weight on $G$, and let
$\Phi$ be a Young function. Let $T_{a,w}$ be a weighted translation on $L^\Phi(G)$, and let $\cal P(T_{a,w})$ be the set of periodic elements of $T_{a,w}$. Then the following conditions are equivalent.
\begin{enumerate}
\item[{\rm(i)}] $T_{a,w}$ is chaotic on $L^\Phi(G)$.
\item[{\rm(ii)}] $\cal P(T_{a,w})$ is dense in $L^\Phi(G)$.
\item[{\rm(iii)}]For each compact subset $K \subseteq G$ with $\lambda(K) >0$, there is a
sequence of Borel sets $(E_{k})$ in $K$ such that
$\displaystyle\lambda(K) = \lim_{k \rightarrow \infty}\lambda(E_k)$, and both sequences
$$\varphi_{n}:=\prod_{j=1}^{n}w\ast\delta_{a^{-1}}^{j}\ \ \ \  and \ \ \ \ \
{\widetilde \varphi_{n}}:=\left(\prod_{j=0}^{n-1}w\ast\delta_{a}^{j}\right)^{-1}$$
admit respectively subsequences $(\varphi_{n_k})$ and $(\widetilde{\varphi}_{n_k})$ satisfying
$$\lim_{k\rightarrow\infty}\left\|\sum_{l=1}^{\infty}\varphi_{ln_k}+\sum_{l=1}^{\infty}\widetilde{\varphi}_{ln_k}\Big|_{E_k}\right\|_{\infty}=0.$$
\end{enumerate}
\end{theorem}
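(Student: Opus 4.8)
**The plan is to prove Theorem \ref{chaos} by establishing the cycle (i) $\Rightarrow$ (ii) $\Rightarrow$ (iii) $\Rightarrow$ (i).**

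The implication (i) $\Rightarrow$ (ii) is immediate from the definition of chaos, since a chaotic operator has a dense set of periodic points by fiat. The substantive content lies in the remaining two implications, and the whole argument hinges on understanding the structure of a periodic element $h \in \cal P(T_{a,w})$, i.e. a function satisfying $T_{a,w}^N h = h$ for some $N \in \Bbb N$. The plan is first to record that such an $h$ decomposes along the orbit structure: writing $h$ in terms of its values on the sets $Ka^{jN}$ and using the invariance Lemma \ref{invariant} together with the explicit action of $T_{a,w}$ via the products $\varphi$ and $\widetilde\varphi$, one sees that the periodicity condition forces $h$ to be built from a seed supported on $K$ by summing its backward and forward images, which converge precisely because the tails $\sum_l \varphi_{lN}$ and $\sum_l \widetilde\varphi_{lN}$ are small. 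This is the bridge connecting condition (iii) to the existence of periodic points.

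For (ii) $\Rightarrow$ (iii), I would fix a compact $K$ with $\lambda(K)>0$ and $\varepsilon \in (0,1)$, take the characteristic function $\chi_K$, and use density of $\cal P(T_{a,w})$ to find a periodic $h$ with $N_\Phi(h - \chi_K) < \varepsilon^2$ and say $T_{a,w}^N h = h$. Using aperiodicity of $a$ to choose $N$ large enough that $K \cap Ka^{\pm n} = \emptyset$ for the relevant range, I would run the same measure-estimate machinery as in the proof of Theorem \ref{tmr}: define exceptional sets where $h$ deviates from $1$ on $K$ and where the partial tail sums $\sum_{l=1}^{J}\varphi_{lN}$ and $\sum_{l=1}^{J}\widetilde\varphi_{lN}$ are large, bound their measures by quantities of the form $1/\Phi(\tfrac{1-\varepsilon}{\varepsilon})$, and remove them from $K$ to obtain $E_N$. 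The periodicity of $h$ is what converts the single condition $T_{a,w}^N h = h$ into control over the \emph{entire} infinite sum $\sum_{l=1}^\infty \varphi_{lN} + \sum_{l=1}^\infty \widetilde\varphi_{lN}$ on $E_N$, since the values of $h$ on the images $Ka^{lN}$ are governed by these products. I would then let $\varepsilon \to 0$ along a subsequence to extract $(E_k)$ and $(n_k)$ with the required sup-norm limit, invoking $\lim_{t\to\infty}\Phi(t)=\infty$.

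For (iii) $\Rightarrow$ (i), observe that condition (iii) is strictly stronger than condition (ii) of Corollary \ref{transitivity} (the tail-sum smallness dominates smallness of each term), so $T_{a,w}$ is already topologically transitive; it remains to produce dense periodic points. Given $f \in C_c(G)$ with support $K$, I would form the candidate periodic point
$$u_k = \sum_{l=0}^{\infty} S_{a,w}^{ln_k}(f\chi_{E_k}) + \sum_{l=1}^{\infty} T_{a,w}^{ln_k}(f\chi_{E_k}),$$
which is genuinely $n_k$-periodic by construction, and show $N_\Phi(u_k - f) \to 0$ using that the two infinite sums are controlled in norm by $\|\,\sum_l \varphi_{ln_k} + \sum_l \widetilde\varphi_{ln_k}\,|_{E_k}\|_\infty \to 0$ together with $N_\Phi(f\chi_{K\setminus E_k}) \to 0$. \textbf{The main obstacle I anticipate} is the convergence and periodicity bookkeeping for these doubly-infinite sums: one must verify that $u_k$ lies in $L^\Phi(G)$, that the series converges in the Luxemburg norm (not merely formally), and that the supports $Ka^{ln_k}$ are genuinely disjoint across $l$ so that the sum is well defined and exactly periodic — this disjointness is where aperiodicity of $a$ is used again, and keeping the forward ($T_{a,w}$) and backward ($S_{a,w}$) branches from overlapping requires the $K \cap Ka^{\pm n}=\emptyset$ estimate applied uniformly in $l$.
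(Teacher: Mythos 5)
Your proposal is correct and follows essentially the same route as the paper: the implication (i) $\Rightarrow$ (ii) is definitional, (ii) $\Rightarrow$ (iii) is proved by approximating $\chi_K$ by periodic points, exploiting aperiodicity to make the translates $Ka^{rn_k}$ pairwise disjoint and using periodicity to convert smallness of the approximant off $K$ into pointwise control of the full series $\sum_l \varphi_{ln_k}+\sum_l \widetilde{\varphi}_{ln_k}$ off exceptional sets of measure $O(1/\Phi(\cdot))$, and (iii) $\Rightarrow$ (i) combines Corollary \ref{transitivity} with exactly the paper's periodic vector $v_k=f\chi_{E_k}+\sum_{l\geq1}T_{a,w}^{ln_k}(f\chi_{E_k})+\sum_{l\geq1}S_{a,w}^{ln_k}(f\chi_{E_k})$. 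The only cosmetic differences are your $\varepsilon\to 0$ formulation (the paper fixes $\varepsilon=2^{-k}$, choosing periodic $f_k$ with $N_\Phi(f_k-\chi_K)<4^{-k}$ and periods $n_k>M$ taken as suitable multiples) and your explicit partial-sum bookkeeping, neither of which changes the argument.
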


\begin{proof}
We will show (ii) $\Rightarrow$ (iii), and (iii) $\Rightarrow$ (i).

(ii) $\Rightarrow$ (iii). Let $K \subseteq G$ be a compact set with $\lambda(K) >0$.
Since $a$ is aperiodic, there exists $M\in\Bbb{N}$ such that $K\cap Ka^{\pm m}=\emptyset$ for all $m> M$. Let $\chi_K\in L^\Phi(G)$
be the characteristic function of $K$. By the density of $\cal P(T_{a,w})$, we can find a sequence $(f_k)$ of periodic points of $T_{a,w}$ satisfying
$N_{\Phi}(f_k-\chi_{K})<\frac{1}{4^k}$, and a sequence $(n_k)\subset \Bbb{N}$ such that $T_{a,w}^{n_k}f_k=f_k=S_{a,w}^{n_k}f_k$, where we may assume $n_{k+1}>n_k> M$.
Therefore, $Ka^{rn_k}\cap Ka^{sn_k}=\emptyset$ for all $r,s\in \Bbb{Z}$ with $r\neq s$.

Let $A_k=\{x\in K:|f_k(x)-1|\geq\frac{1}{2^k}\}$.
Then
$$|f_k(x)|>1-\frac{1}{2^k} \qquad ( x\in K\setminus A_k)\qquad \mbox{and}\qquad \lambda(A_k)<\frac{1}{\Phi(2^k)}.$$
Indeed,
\begin{eqnarray*}
\frac{1}{4^k} &>& N_\Phi(f_k-\chi_K)\\
&\geq& N_\Phi\left(\chi_K(f_k-1)\right)\\
&\geq& N_\Phi\left(\chi_{A_k}(f_k-1)\right)\\
&\geq& N_\Phi(\chi_{A_k}\frac{1}{2^k})\\
&=& \frac{\frac{1}{2^k}}{\Phi^{-1}(\frac{1}{\lambda(A_k)})}
\end{eqnarray*}
which implies $\lambda(A_k)<\frac{1}{\Phi(2^k)}$.
Let
$$B_k=\left\{x\in K\setminus A_k:\sum_{l=1}^{\infty}\varphi_{ln_k}(x)+\sum_{l=1}^{\infty}\widetilde{\varphi}_{ln_k}(x)\geq\frac{1}{2^k}\right\},$$
and let $E_k=K\setminus (A_k\cup B_k)$.
Then
$$\sum_{l=1}^{\infty}\varphi_{ln_k}(x)+\sum_{l=1}^{\infty}\widetilde{\varphi}_{ln_k}(x)<\frac{1}{2^k} \qquad ( x\in E_k).$$
To complete the proof, we will show
$$\lambda(K\setminus E_k)<\frac{1}{\Phi(2^k)}+\frac{1}{\Phi(2^k-1)}.$$
Again, by Lemma \ref{invariant}, the right invariance of the Haar measure $\lambda$, and $Ka^{rn_k}\cap Ka^{sn_k}=\emptyset$ for $r\neq s$, we arrive at
\begin{eqnarray*}
\frac{1}{4^k} &>& N_\Phi(f_k-\chi_K)\\
&\geq& N_\Phi\left(\chi_{G\setminus K}(f_k-0)\right)\\
&\geq& N_\Phi\left(\sum_{l=1}^{\infty}\chi_{Ka^{ln_k}}f_k+\sum_{l=1}^{\infty}\chi_{Ka^{-ln_k}}f_k\right)\\
&=& N_\Phi\left(\sum_{l=1}^{\infty}\chi_{K}(f_k*\delta_{a^{-ln_k}})+\sum_{l=1}^{\infty}\chi_{K}(f_k*\delta_{a^{ln_k}})\right)\\
&=& N_\Phi\left(\sum_{l=1}^{\infty}\chi_{K}((T_{a,w}^{ln_k}f_k)*\delta_{a^{-ln_k}})
+\sum_{l=1}^{\infty}\chi_{K}((S_{a,w}^{ln_k}f_k)*\delta_{a^{ln_k}})\right)\\
&\geq& N_\Phi\left(\sum_{l=1}^{\infty}\chi_{B_k}\varphi_{ln_k}f+\sum_{l=1}^{\infty}\chi_{B_k}\widetilde{\varphi}_{ln_k}f\right)\\
&>& N_\Phi\left((1-\frac{1}{2^k}){\frac{1}{2^k}}\chi_{B_k}\right)\\
&=& \frac{(1-\frac{1}{2^k}){\frac{1}{2^k}}}{\Phi^{-1}(\frac{1}{\lambda(B_k)})}.
\end{eqnarray*}
Hence $\Phi^{-1}(\frac{1}{\lambda(B_k)})>2^k-1$ which yields that $\lambda(B_k)<\frac{1}{\Phi(2^k-1)}$.
So, together with $\lambda(A_k)<\frac{1}{\Phi(2^k)}$, the estimate $\lambda(K\setminus E_k)<\frac{1}{\Phi(2^k)}+\frac{1}{\Phi(2^k-1)}$
follows.

(iii) $\Rightarrow$ (i). By Corollary \ref{transitivity}, condition (iii) implies $T_{a,w}$ is topologically transitive.
Here we will show ${\cal P}(T_{a,w})$ is dense in $L^\Phi(G)$.
Let $f\in C_c(G)$ with compact support $K\subseteq G$. Then there exist a sequence of Borel sets $(E_k)$ in $K$, and a sequence $(n_k)$
such that $\displaystyle\lambda(K) = \lim_{k \rightarrow \infty}\lambda(E_k)$ and
$$\sum_{l=1}^{\infty}\varphi_{ln_k}(x)+\sum_{l=1}^{\infty}\widetilde{\varphi}_{ln_k}(x)<\frac{1}{2^k}\qquad (x\in E_k).$$
Let
$$v_k:=f\chi_{E_k}+\sum_{l=1}^{\infty}T^{ln_k}_{a,w}(f\chi_{E_k})+\sum_{l=1}^{\infty}S^{ln_k}_{a,w}(f\chi_{E_k}).$$
Then
\begin{eqnarray*}
T_{a,w}^{n_k}v_k&=&T_{a,w}^{n_k}(f\chi_{E_k})+\sum_{l=1}^{\infty}T_{a,w}^{n_k}T^{ln_k}_{a,w}(f\chi_{E_k})
+\sum_{l=1}^{\infty}T_{a,w}^{n_k}S^{ln_k}_{a,w}(f\chi_{E_k})\\
&=&\sum_{l=1}^{\infty}T^{ln_k}_{a,w}(f\chi_{E_k})+f\chi_{E_k}+\sum_{l=1}^{\infty}S^{ln_k}_{a,w}(f\chi_{E_k})=v_k
\end{eqnarray*}
which implies that $v_k\in {\cal P}(T_{a,w})$ for each $k$. Moreover, we note that
$$N_{\Phi}(f\chi_{K\setminus E_k})\leq \frac{\|f\|_{\infty}}{\Phi^{-1}(\frac{1}{\lambda(K\setminus E_k)})}\rightarrow 0$$
as $k\rightarrow\infty$ by the fact $\displaystyle\lambda(K) = \lim_{k \rightarrow \infty}\lambda(E_k)$.
Also, by Lemma \ref{invariant} and the invariance of the Haar measure $\lambda$,
\begin{eqnarray*}
&&N_{\Phi}\left(\sum_{l=1}^{\infty}T_{a,w}^{ln_k}(f\chi_{E_k})+\sum_{l=1}^{\infty}S_{a,w}^{ln_k}(f\chi_{E_k})\right)\\
&=&N_{\Phi}\left(\sum_{l=1}^{\infty}(\prod_{j=0}^{ln_k-1}w*\delta_{a}^j)(f*\delta_{a^{ln_k}})(\chi_{E_k}*\delta_{a^{ln_k}})
+\sum_{l=1}^{\infty}(\prod_{j=1}^{ln_k}w*\delta_{a^{-1}}^j)^{-1}(f*\delta_{a^{-ln_k}})(\chi_{E_k}*\delta_{a^{-ln_k}})\right)\\
&=&N_{\Phi}\left(\sum_{l=1}^{\infty}(\prod_{j=1}^{ln_k}w*\delta_{a^{-1}}^j)f\chi_{E_k}
+\sum_{l=1}^{\infty}(\prod_{j=0}^{ln_k-1}w*\delta_{a}^j)^{-1}f\chi_{E_k}\right)\\
&=&N_{\Phi}\left(\sum_{l=1}^{\infty}\varphi_{ln_k}f\chi_{E_k}+\sum_{l=1}^{\infty}\widetilde{\varphi}_{ln_k}f\chi_{E_k}\right)\\
&<&\frac{\|f\|_{\infty}\frac{1}{2^k}}{\Phi^{-1}(\frac{1}{\lambda(E_k)})}\rightarrow 0
\end{eqnarray*}
as $k\rightarrow\infty$.
Hence we arrive at
$$N_{\Phi}(v_k-f)\leq N_{\Phi}(f\chi_{K\setminus E_k})+N_{\Phi}\left(\sum_{l=1}^{\infty}T_{a,w}^{ln_k}(f\chi_{E_k})+\sum_{l=1}^{\infty}S_{a,w}^{ln_k}(f\chi_{E_k})\right)\rightarrow 0$$
as $k\rightarrow \infty$, which says $v_k\rightarrow f$ as $k\rightarrow \infty$. Combing all these, $\cal P(T_{a,w})$ is dense in $L^\Phi(G)$.
\end{proof}

\begin{example}
Let $G$ be the Heisenberg group $\Bbb{H}$, and let $T_{(3,0,2),w}$ be the weighted translation
on $L^\Phi(\Bbb{H})$ as defined in Example \ref{example}. Let
$$w(x,y,z)=\left\{
\begin{array}{ll}
\frac{1}{2}& \mbox{ if } z\geq 1\\\\
\frac{1}{2^z}& \mbox{ if } -1<z< 1\\\\
2 & \mbox{ if } z\leq-1.
\end{array}
\right.$$
Then the operator $T_{(3,0,2),w}$ is chaotic. Indeed,
given a compact subset $K$ of $\Bbb{H}$,
without loss of generality, we may assume $(x,y,0)\in K$.
Then
\begin{eqnarray*}
&&\sum_{l=1}^{\infty}\varphi_{ln}(x,y,0)+\sum_{l=1}^{\infty}\widetilde{\varphi}_{ln}(x,y,0)\\
&=&\sum_{l=1}^{\infty}\prod_{s=1}^{ln}w(x+3s,y,0+2s)+\sum_{l=1}^{\infty}\frac{1}{\prod_{s=0}^{ln-1}w(x-3s,y,0-2s)}\\
&=&\sum_{l=1}^{\infty}\frac{1}{2^{ln}}+\sum_{l=1}^{\infty}\frac{1}{2^{ln-1}}=\frac{3}{2^n-1}\rightarrow0
\end{eqnarray*}
as $n\rightarrow\infty$.
\end{example}

\begin{corollary}
Let $G$ be a locally compact group, and let $a\in G$ be an aperiodic element. Let $w$ be a weight on $G$, and let
$\Phi$ be a Young function. Let $T_{a,w}$ be a weighted translation on $L^\Phi(G)$.
If $T_{a,w}$ is chaotic or mixing, then it is topologically multiply recurrent.
\end{corollary}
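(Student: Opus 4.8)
The plan is to reduce both implications to verifying condition (ii) of Theorem~\ref{tmr}, feeding in the characterizations already established: Theorem~\ref{chaos}(iii) when $T_{a,w}$ is chaotic, and Corollary~\ref{mixing}(ii) when $T_{a,w}$ is mixing. The one feature making both reductions work is that every $\varphi_{ln}$ and every $\widetilde{\varphi}_{ln}$ is a product of strictly positive weights, hence pointwise nonnegative; this lets me bound the quantities appearing in Theorem~\ref{tmr}(ii) by larger quantities that the hypotheses already control.

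For the chaotic case, I would fix $L\in\Bbb{N}$ and a compact $K$ with $\lambda(K)>0$, and take the sets $(E_k)$ and the subsequence $(n_k)$ supplied by Theorem~\ref{chaos}(iii). Since every summand is nonnegative, for each fixed $l_0$ and each $x\in E_k$ one has $\varphi_{l_0 n_k}(x)\le \sum_{l=1}^{\infty}\varphi_{ln_k}(x)+\sum_{l=1}^{\infty}\widetilde{\varphi}_{ln_k}(x)$, and likewise for $\widetilde{\varphi}_{l_0 n_k}$. Taking suprema over $E_k$ and letting $k\to\infty$, the right-hand side tends to $0$ by (iii); hence $\|\varphi_{ln_k}|_{E_k}\|_\infty\to 0$ and $\|\widetilde{\varphi}_{ln_k}|_{E_k}\|_\infty\to 0$ for every $1\le l\le L$ simultaneously. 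Together with $\lambda(K)=\lim_k\lambda(E_k)$, this is exactly condition (ii) of Theorem~\ref{tmr}, so $T_{a,w}$ is topologically multiply recurrent.

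For the mixing case, the difficulty is that Corollary~\ref{mixing}(ii) controls only $\varphi_n$ and $\widetilde{\varphi}_n$ (the case $l=1$) along the full sequence, whereas Theorem~\ref{tmr}(ii) demands control of $\varphi_{ln}$ and $\widetilde{\varphi}_{ln}$ for all $l\le L$ at one common gap. I would exploit that mixing furnishes good sets at every scale: apply Corollary~\ref{mixing}(ii) to $K$ to obtain $(E_m)$ with $\lambda(K)=\lim_m\lambda(E_m)$ and $\|\varphi_m|_{E_m}\|_\infty,\ \|\widetilde{\varphi}_m|_{E_m}\|_\infty\to 0$, then fix $L$, set $n_k=k$, and define $E'_k=\bigcap_{l=1}^{L}E_{lk}$. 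Since $E'_k\subseteq E_{lk}$, we get $\|\varphi_{lk}|_{E'_k}\|_\infty\le \|\varphi_{lk}|_{E_{lk}}\|_\infty\to 0$ as $k\to\infty$ (and similarly for $\widetilde{\varphi}_{lk}$) for each $1\le l\le L$, because $lk\to\infty$. Finally subadditivity gives $\lambda(K\setminus E'_k)\le \sum_{l=1}^{L}\lambda(K\setminus E_{lk})\to 0$, so $\lambda(E'_k)\to\lambda(K)$. This verifies condition (ii) of Theorem~\ref{tmr} and yields multiple recurrence.

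The main obstacle is the mixing case: passing from a gap-one statement to a statement about the gaps $k,2k,\dots,Lk$ sharing a single base gap. The resolution is the observation that mixing supplies good sets at every scale, so intersecting the finitely many sets $E_k,E_{2k},\dots,E_{Lk}$ costs only a finite sum of measures that still vanishes. In both cases the positivity of the weighted products is what allows the relevant suprema to be dominated, and no estimate beyond those already in the proofs of Theorem~\ref{tmr} and Corollary~\ref{mixing} is required.
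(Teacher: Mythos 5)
Your proof is correct and takes essentially the same route as the paper's: the paper's proof is a one-line citation of Corollary~\ref{mixing} and Theorem~\ref{chaos} (implicitly via the characterization in Theorem~\ref{tmr}), and your two reductions are exactly the details that one-liner leaves implicit. Both of your verifications are sound --- the term-by-term domination of $\varphi_{ln_k}$ and $\widetilde{\varphi}_{ln_k}$ by the series in Theorem~\ref{chaos}(iii) (valid since the weights are strictly positive), and in the mixing case the passage to $E'_k=\bigcap_{l=1}^{L}E_{lk}$, whose measure defect is controlled by the finite sum $\sum_{l=1}^{L}\lambda(K\setminus E_{lk})\rightarrow 0$.
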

\begin{proof}
The result follows by Corollary \ref{mixing} and Theorem \ref{chaos}.
\end{proof}

\begin{remark}
We note that there exists a weighted shift which is topologically multiply recurrent but is neither chaotic nor topologically mixing in \cite{bg07}.
\end{remark}

\vspace{.1in}

\begin{thebibliography}{90}

\bibitem{ak17} E. Abakumov and Y. Kuznetsova, {\it Density of translates in weighted $L^p$ spaces on locally compact groups},
Monatsh. Math. \textbf{183} (2017) 397-413.

\bibitem{an97} S. I. Ansari, {\it Existence of hypercyclic operators on topological vector spaces}, J. Funct. Anal. \textbf{148} (1997)
384-390.

\bibitem{aa17} M. R. Azimi and I. Akbarbaglu, {\it Hypercyclicity of weighted translations on Orlicz spaces}, Oper. Matrices, in press.

\bibitem{bmbook} F. Bayart and \'E. Matheron, Dynamics of linear operators, Cambridge Tracts in Math. \textbf{179},
 Cambridge University Press, Cambridge, 2009.

\bibitem{bb09} F. Bayart and T. Berm\'udez, {\it Semigroups of chaotic operators},
Bull. Lond. Math. Soc. \textbf{41} (2009) 823-830.

\bibitem{bg07} F. Bayart and S. Grivaux, {\it Invariant Gaussian measures for operators on Banach
spaces and linear dynamics}, Proc. Lond. Math. Soc. \textbf{94} (2007) 181-210.

\bibitem{BBCP} T. Berm\'udez, A. Bonilla, J.A. Conejero and A. Peris, {\it
Hypercyclic, topologically mixing and chaotic semigroups on Banach
spaces}, Studia Math. \textbf{170} (2005) 57-75.

\bibitem{bg99} L. Bernal-Gonz\'alez, {\it On hypercyclic operators on Banach
spaces}, Proc. Amer. Math. Soc. \textbf{127} (1999) 1003-1010.

\bibitem{bmp03}  J. Bonet, F. Martinez-Gimenez and A. Peris, {\it Linear chaos on Frechet spaces},
Int. J. Bifurcat. Chaos \textbf{13} (2003) 1649-1655.


\bibitem{chen11} C-C. Chen, {\it Chaotic weighted translations on groups}, Arch. Math.
\textbf{97} (2011) 61-68.

\bibitem{chen141} C-C. Chen, {\it Chaos for cosine operator functions generated by shifts},
Int. J. Bifurcat. Chaos \textbf{24} (2014) Article ID 1450108, 7 pages.

\bibitem{chen16} C-C. Chen, {\it Recurrence for weighted
translations on groups}, Acta Math. Sci. \textbf{36B} (2016) 443-452.

\bibitem{chen162} C-C. Chen, {\it Recurrence of cosine operator functions
on groups}, Canad. Math. Bull. \textbf{59} (2016) 693-734.

\bibitem{cc09} C-C. Chen and C-H. Chu, {\it Hypercyclicity of weighted
convolution operators on homogeneous spaces}, Proc. Amer. Math.
Soc. \textbf{137} (2009) 2709-2718.

\bibitem{cc11} C-C. Chen and C-H. Chu, {\it Hypercyclic weighted
translations on groups}, Proc. Amer. Math.
Soc. \textbf{139} (2011) 2839-2846.

\bibitem{kuchen17} K-Y. Chen, {\it On aperiodicity and hypercyclic weighted translation operators}, arXiv 1701.03604.

\bibitem{cl17} V. Chilin and S. Litvinov, {\it Individual ergodic theorems in noncommutative Orlicz spaces},
Positivity \textbf{21} (2017) 49-59.

\bibitem{cmp} G. Costakis, A. Manoussos and I. Parissis, {\it Recurrent linear operators},
Complex Anal. Oper. Th. \textbf{8} (2014) 1601-1643.

\bibitem{cp12} G. Costakis and I. Parissis, {\it Szemer\`edi's theorem, frequent hypercyclicity
and multiple recurrence}, Math. Scand. \textbf{110} (2012) 251-272.

\bibitem{cs04} G. Costakis and M. Sambarino, {\it Topologically
mixing hypercyclic operators}, Proc. Amer. Math. Soc. \textbf{132}
(2004) 385-389.

\bibitem{DE01} R. deLaubenfels and H. Emamirad, {\it Chaos for functions of discrete and
continuous weighted shift operators}, Ergodic Theory Dynam. Systems \textbf{21} (2001) 1411-1427.

\bibitem{DSW97} W. Desch, W. Schappacher and G. F.Webb, {\it Hypercyclic and chaotic
semigroups of linear operators}, Ergodic Theory Dynam. Systems \textbf{17} (1997) 793-819.

\bibitem{ge00} K.-G. Grosse-Erdmann, {\it Hypercyclic and chaotic weighted shifts}, Studia Math. \textbf{139} (2000) 47-68.

\bibitem{gpbook} K.-G. Grosse-Erdmann and A. Peris, Linear Chaos, Universitext, Springer, 2011.

\bibitem{ha15} P. A.  H\"ast\"o, {\it The maximal operator on generalized Orlicz spaces}, J. Funct. Anal. \textbf{269} (2015) 4038-4048.

\bibitem{hl16} S-A. Han, Y-X. Liang, Disjoint hypercyclic weighted translations generated by aperiodic elements, Collect. Math.
\textbf{67} (2016) 347-356.

\bibitem{kalmes07} T. Kalmes, {\it Hypercyclic, mixing, and chaotic $C_0$-semigroups induced by semiflows},
Ergodic Theory Dynam. Systems \textbf{27} (2007) 1599-1631.

\bibitem{kosticbook} M. Kosti\'c, Abstract Volterra Integro-Differential Equations, CRC Press, Boca Raton, Fl., 2015.

\bibitem{ls02} F. Le\'on-Saavedra, {\it Operators with hypercyclic Ces\`aro means},
Studia Math. \textbf{152} (2002) 201-215.

\bibitem{mp10} F. Martinez-Gimenez and A. Peris, {\it Chaotic polynomials on sequence and function spaces},
Int. J. Bifurcat. Chaos \textbf{20} (2010) 2861-2867.


\bibitem{oo15} A. Osancliol and S. \"Oztop, {\it Weighted Orlicz algebras on locally compact groups},
J. Aust. Math. Soc. \textbf{99} (2015) 399-414.

\bibitem{pi17} M. C. Piaggio, {\it Orlicz spaces and the large scale geometry of Heintze groups},
Math. Ann. \textbf{368} (2017) 433-481.

\bibitem{rr91} M. M. Rao and Z. D. Ren, Theory of Orlicz spaces, Monogr. Textbooks Pure Appl. Math., vol. 146,
Dekker, New York, 1991.

\bibitem{sa95} H. Salas, {\it Hypercyclic weighted shifts}, Trans. Amer. Math. Soc. \textbf{347} (1995) 993-1004.

\bibitem{ta17} M.  Tanaka, {\it Property $(T_{L^\Phi})$ and property $(F_{L^\Phi})$ for Orlicz spaces $L^\Phi$},
J. Funct. Anal. \textbf{272} (2017) 1406-1434.
\end{thebibliography}
\end{document}